\DeclareMathOperator{\pd}{PD}
\DeclareMathOperator{\gl}{GL}
\DeclareMathOperator{\spl}{SL}
\newtheorem{lemma}{Lemma}
\begin{document}

\title{A Note on Affine Invariant Cost Functions}

\author{Jingbo Liu}
\address{Institute for Data, Systems, and Society (IDSS), Massachusetts Institute for Technology}
\email{jingbo@mit.edu}

\begin{abstract}
We show that any affine invariant function on the set of positive definite matrices must factor through the determinant function, as long as the restriction of the function to scalar matrices is surjective.
A motivation from robust statistics is discussed.
\end{abstract}

\date{\today}

\maketitle

\subsection*{Notation} 
We use the following notations for sets of matrices.
\begin{itemize}
\item $\pd(n)$: the set of $n\times n$ positive definite matrices.
\item $\gl(n)$: the general linear group, i.e., the set of invertible $n\times n$ matrices.
\item $\spl(n)$: the special linear group, i.e., the set of $n\times n$ matrices whose determinant equals $1$, which is a subgroup of $\gl(n)$.
\item $I$ denotes the identity matrix.
\end{itemize}
Unless otherwise specified, all matrices are over the reals. We also use a few standard group-theoretic notations: suppose that $\mathcal{G}$ is a group and $\mathcal{K}$ is a normal subgroup, then the \emph{quotient group} $\mathcal{G}/\mathcal{K}$ is the collection of cosets of $\mathcal{K}$, equipped with the group operations inherited from $\mathcal{G}$.
The \emph{canonical map} is the homomorphism $\phi\colon \mathcal{G}\to\mathcal{G}/\mathcal{K}$ sending an element $g\in\mathcal{G}$ to $g\mathcal{K}$, the $\mathcal{K}$-coset containing $g$, and $\mathcal{K}$ is the kernel of this homomorphism,
denoted by $\ker(\phi)$.

\subsection*{Definition} Let $f\colon \pd(n)\to \mathcal{S}$ be a map from the set of $n\times n$ positive semidefinite matrices to an arbitrary set $\mathcal{S}$.
$f$ is said to be \emph{affine invariant}, if
\begin{align}
f(M)=f(N) \quad \Longrightarrow \quad f(A^{\top}MA) = f(A^{\top}NA) 
\label{e_inv}
\end{align}
for any $M, N\in \pd(n)$ and $A \in \gl (n)$.

\subsection*{Theorem} Let $f$ be a surjective affine invariant map between $\pd(n)$ and $\mathcal{S}$,
and suppose that 
\begin{align}
\cup_{s\in(0,\infty)} \{f(sI)\}=\mathcal{S}.
\label{e_surj}
\end{align}
Then $f$ factors as 
\begin{align}
f = b\circ H \circ\det
\end{align}
where:
\begin{itemize}
\item $\det$ denotes the determinant;
\item $H$ is a homomorphism from $(0,\infty)$, where $(0,\infty)$ is viewed as the multiplicative group);
\item $b$ is a bijection from the image of $H$ to $\mathcal{S}$. 
\end{itemize}
The homomorphism $H$ can be viewed as the canonical map $(0,\infty)\to (0,\infty)/\ker(H)$ (up to isomorphism). 
Hence $H$ is uniquely specified by $\ker(H)$,
a subgroup of the multiplicative group $(0,\infty)$,
which corresponds to a subgroup of the additive group $\mathbb{R}$ by taking the logarithm.
Examples of subgroups of $\mathbb{R}$ include the lattice $a\mathbb{Z}$ ($a\in\mathbb{R}$) and the rationals $\mathbb{Q}$.
When we take $a\mathbb{Z}$ and $a=0$ we have 
\begin{align}
f(M)=\det(M)
\end{align}
since $\ker(H)=\{0\}$ and $H$ is the identity map.
When $a\mathbb{Z}$ but $a\neq 0$, we can write
\begin{align}
f(M)=2^{ak}\det(M)
\end{align}
where $k$ is the unique integer such that $2^{ak}\det(M)\in[1,2^a)$.

\subsection*{Remark} The assumption \eqref{e_surj} is necessary: 
for example, the identity map $f\colon M\mapsto M$ is affine invariant, but fails \eqref{e_surj}, 
and does not factor through $\det$.

We establish several lemmas before the proof of the main theorem,
and at the end of the note a motivation from robust statistics is discussed.
\begin{lemma}\label{lem1}
\begin{align}
\mathcal{K}:= \{A\in \gl(n)\colon  f(A^{\top}A) =f(I) \}
\end{align}
forms a normal subgroup of $\gl(n)$.
\end{lemma}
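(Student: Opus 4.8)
The plan is to build everything on a single consequence of affine invariance: applying the defining implication \eqref{e_inv} to the relation $f(A^{\top}A)=f(I)$ that characterizes membership in $\mathcal{K}$, with an arbitrary congruence matrix $C\in\gl(n)$, yields
\begin{align}
f\bigl(C^{\top}A^{\top}A\,C\bigr)=f\bigl(C^{\top}C\bigr),\qquad\text{i.e.}\qquad f\bigl((AC)^{\top}(AC)\bigr)=f(C^{\top}C),
\label{e_workhorse}
\end{align}
valid for every $A\in\mathcal{K}$ and every $C\in\gl(n)$. I will treat \eqref{e_workhorse} as the workhorse of the whole argument. The subgroup axioms then fall out by specializing $C$: the identity lies in $\mathcal{K}$ since $f(I^{\top}I)=f(I)$; for closure under products, if $A,B\in\mathcal{K}$ then taking $C=B$ gives $f((AB)^{\top}(AB))=f(B^{\top}B)=f(I)$, so $AB\in\mathcal{K}$; and for inverses, taking $C=A^{-1}$ collapses the left-hand matrix of \eqref{e_workhorse} to the identity, giving $f(I)=f((A^{-1})^{\top}A^{-1})$, so $A^{-1}\in\mathcal{K}$.

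Normality is the real content, and I expect it to be the main obstacle, because invariance produces the \emph{left}-multiplication identity \eqref{e_workhorse}, whereas conjugation $A\mapsto GAG^{-1}$ forces an arbitrary $G\in\gl(n)$ to be moved across $A$. My plan is to route around this asymmetry by first proving the intermediate claim that congruence by any element of $\mathcal{K}$ fixes every fiber of $f$:
\begin{align}
A\in\mathcal{K},\ M\in\pd(n)\quad\Longrightarrow\quad f\bigl(A^{\top}MA\bigr)=f(M).
\label{e_conj}
\end{align}

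This is where the surjectivity hypothesis \eqref{e_surj} is indispensable. Given $M$, it furnishes a scalar $t\in(0,\infty)$ with $f(tI)=f(M)$; applying \eqref{e_inv} to this equality with congruence $A$ gives $f(A^{\top}MA)=f(t\,A^{\top}A)$, and then the workhorse \eqref{e_workhorse} with the scalar choice $C=\sqrt{t}\,I$ gives $f(t\,A^{\top}A)=f(tI)=f(M)$, which establishes \eqref{e_conj}. I expect this reduction of an arbitrary $M$ to a scalar matrix to be the decisive and most delicate step, and I note that it is exactly what the Remark predicts: without a scalar matrix in each fiber there is no way to upgrade the left-multiplication identity to genuine conjugation invariance.

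With \eqref{e_conj} in hand, normality is a short computation. For $A\in\mathcal{K}$ and $G\in\gl(n)$, I would expand $(GAG^{-1})^{\top}(GAG^{-1})=(G^{-1})^{\top}\bigl[A^{\top}(G^{\top}G)A\bigr]G^{-1}$; applying \eqref{e_conj} with $M=G^{\top}G$ shows the bracketed matrix is $f$-equivalent to $G^{\top}G$, and then invariance \eqref{e_inv} with congruence $G^{-1}$ transports this equivalence to $f((GAG^{-1})^{\top}(GAG^{-1}))=f((G^{-1})^{\top}G^{\top}GG^{-1})=f(I)$, so that $GAG^{-1}\in\mathcal{K}$ and $\mathcal{K}$ is normal.
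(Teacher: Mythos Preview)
Your argument is correct and complete as a proof of Lemma~\ref{lem1}, but it follows a genuinely different route from the paper's. The paper does not verify $G\mathcal{K}G^{-1}\subseteq\mathcal{K}$ directly; instead it proves the stronger statement that \emph{every} commutator $B^{\top}A^{\top}B^{-1}A^{-1}$ lies in $\mathcal{K}$ (equation~\eqref{e_com}), which forces $\mathcal{K}$ to contain the derived subgroup of $\gl(n)$ and hence to be normal. That commutator identity is obtained by reducing both $f(A^{\top}B^{\top}BA)$ and $f(B^{\top}A^{\top}AB)$ to the same diagonal expression via singular value decomposition, after first observing (using \eqref{e_surj}, exactly as you do) that orthogonal congruence preserves $f$.

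Your approach is more elementary: no SVD, and the key intermediate claim \eqref{e_conj}---that congruence by an element of $\mathcal{K}$ fixes every fiber of $f$---is both natural and cleanly isolates where \eqref{e_surj} enters. The trade-off is that the paper's proof does double duty: the commutator containment \eqref{e_com} is invoked again in Lemma~3 to conclude $\spl(n)\subseteq\mathcal{K}$ via the identification of the derived subgroup of $\gl(n)$. With your proof of Lemma~\ref{lem1} in place, that step of Lemma~3 would need a separate argument.
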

\begin{proof}
We first show that $\mathcal{K}$ is a subgroup. 
Indeed, for any $A, B \in \gl(n)$ such that $f(A^{\top}A) = f(B^{\top}B) = f(I)$, we can deduce from \eqref{e_inv} that $f(B^{\top}A^{\top}AB) = f(I)$, and $f(A^{-\top}A^{-1}) = f(I)$,
meaning that $AB, A^{-1}\in\mathcal{K}$.

Next we show that $\mathcal{K}$ is normal.
First note the following property: if $Q$ is an orthogonal matrix and $A \in \gl(n)$, then 
\begin{align}
f(A^{\top}A) = f(Q^{\top}A^{\top}AQ).
\label{e_orthogonal}
\end{align} 
To see this, assume that $f(A^{\top}A) = f(sI)$ for some $s\in(0,\infty)$, then apply the affine invariance assumption \eqref{e_inv}.

To show the normality of $\mathcal{K}$ it suffices to prove that 
\begin{align}
B^{\top}A^{\top}B^{-1}A^{-1}\in\mathcal{K},
\quad \forall A, B \in \gl(n),
\label{e_com}
\end{align}
which, by the affine invariance assumption, is equivalent to 
\begin{align}
f(A^{\top}B^{\top}BA) = f(B^{\top}A^{\top}AB).
\label{e_commute}
\end{align}
We now prove \eqref{e_commute} by singular value decomposition.
Let us assume that $A=P_1\Lambda_1Q_1$ and $B=P_2\Lambda_2Q_2$ where $P_i,Q_i$ are orthogonal matrices, and $\Lambda_i$ are diagonal matrices, $i=1,2$. Then we have
\begin{align}
f(A^{\top}B^{\top}BA)
&= f((P_2\Lambda_2Q_2P_1\Lambda_1Q_1)^{\top}(P_2\Lambda_2Q_2P_1\Lambda_1Q_1))
\\
&= f((\Lambda_2Q_2P_1\Lambda_1)^{\top}\Lambda_2Q_2P_1\Lambda_1)
\label{e7}
\\
&= f(( \Lambda_2\Lambda_1)^{\top}\Lambda_2\Lambda_1)   
\label{e8}
\end{align}
where \eqref{e7} follows from the property in \eqref{e_orthogonal}.
\eqref{e8} follows from affine invariance and the property in \eqref{e_orthogonal}.
Similarly we also have $f(B^{\top}A^{\top}AB) = f(( \Lambda_2\Lambda_1)^{\top}\Lambda_2\Lambda_1)$, therefore \eqref{e_commute} is established,
and $\mathcal{K}$ must be normal.
\end{proof}

\begin{lemma}
There is a bijection $b\colon \gl(n)/\mathcal{K}\to \mathcal{S}$, such that the map $A\mapsto f(A^{\top}A)$ factors as the canonical map $\gl(n) \to \gl(n)/\mathcal{K}$ followed by $b$.
\end{lemma}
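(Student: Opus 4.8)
The plan is to introduce the auxiliary map $g\colon\gl(n)\to\mathcal{S}$ defined by $g(A)=f(A^{\top}A)$ and to show that it descends to the desired bijection on the quotient. Two preliminary observations set up everything. First, $g$ is surjective: since $f$ is surjective onto $\mathcal{S}$ and every $M\in\pd(n)$ factors as $M=A^{\top}A$ for some $A\in\gl(n)$ (e.g. the positive definite square root), every $s\in\mathcal{S}$ equals $g(A)$ for a suitable $A$. Second, the definition of $\mathcal{K}$ says precisely that $K\in\mathcal{K}$ iff $f(K^{\top}K)=f(I)$, which is the hypothesis I will feed into the affine invariance axiom \eqref{e_inv}.

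First I would verify that $g$ is constant on cosets, which is exactly the asserted factorization $g=b\circ\phi$ through the canonical map $\phi\colon\gl(n)\to\gl(n)/\mathcal{K}$. Writing a right-coset representative as $B=KA$ with $K\in\mathcal{K}$, we have $g(B)=f(A^{\top}K^{\top}KA)$, and since $f(K^{\top}K)=f(I)$, applying \eqref{e_inv} with the matrix $A$ yields $f(A^{\top}K^{\top}KA)=f(A^{\top}A)=g(A)$. Because $\mathcal{K}$ is normal by Lemma~\ref{lem1}, the right cosets $\mathcal{K}A$ coincide with the left cosets $A\mathcal{K}$, so $g$ is constant on each coset and factors through a well-defined map $b\colon\gl(n)/\mathcal{K}\to\mathcal{S}$. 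Surjectivity of $b$ is then inherited from the surjectivity of $g$ noted above.

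It remains to prove that $b$ is injective, i.e. that $g(A)=g(B)$ forces $A\mathcal{K}=B\mathcal{K}$. Suppose $f(A^{\top}A)=f(B^{\top}B)$. Applying \eqref{e_inv} with the matrix $B^{-1}$ gives $f(B^{-\top}A^{\top}AB^{-1})=f(B^{-\top}B^{\top}BB^{-1})=f(I)$. Since $B^{-\top}A^{\top}AB^{-1}=(AB^{-1})^{\top}(AB^{-1})$, this says exactly that $AB^{-1}\in\mathcal{K}$, hence $\mathcal{K}A=\mathcal{K}B$, and by normality $A\mathcal{K}=B\mathcal{K}$. Thus equal values of $b$ force equal cosets, and $b$ is a bijection.

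Every algebraic step reduces to a single use of the affine invariance hypothesis together with the square-root factorization of positive definite matrices, so I do not expect a substantive obstacle. The only point requiring care is bookkeeping the distinction between left and right cosets: the factorization and the injectivity argument both naturally produce statements about right cosets $\mathcal{K}A$, while the canonical map is phrased via left cosets $A\mathcal{K}$, and these must be identified using the normality of $\mathcal{K}$ from Lemma~\ref{lem1} at each stage. Note that the stronger surjectivity assumption \eqref{e_surj} is not needed here; only the plain surjectivity of $f$ is used, with \eqref{e_surj} reserved for the later reduction through the determinant.
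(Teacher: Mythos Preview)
Your proof is correct and follows essentially the same approach as the paper: define $b(A\mathcal{K})=f(A^{\top}A)$, check well-definedness by one application of affine invariance to $f(K^{\top}K)=f(I)$, check injectivity by one application of affine invariance with $B^{-1}$, and obtain surjectivity from the surjectivity of $A\mapsto A^{\top}A$ onto $\pd(n)$. Your presentation is slightly more explicit than the paper's in distinguishing left and right cosets and invoking normality to pass between them (the paper's well-definedness step, which starts from $B_1,B_2\in A\mathcal{K}$ and asserts $B_1B_2^{-1}\in\mathcal{K}$, also tacitly uses normality), and your closing remark that only plain surjectivity of $f$ is needed here is a correct observation not made in the paper.
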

\begin{figure}[h!]
  \centering
\begin{tikzpicture}
[scale=1]
  \tikzset{
mystyle/.style={
circle,
  inner sep=0pt,
  text width=10mm,
  align=center,
  fill=white
  }
}
  \node[mystyle] (c) {};
  \node[mystyle] (L) [left=2 of c] {$\gl(n)$};
  \node[mystyle] (R) [right=2 of c] {$\gl(n)/\mathcal{K}$}; 
  \node[mystyle] (U) [above=1 of c] {$(0,\infty)$};  
  \node[mystyle] (D) [below=1 of c] {$\mathcal{S}$};   

\draw[->] (L) to node[midway,above,sloped]{$A\mapsto \det^2(A)$}(U);
\draw[->] (U) to node[midway,above,sloped]{$H$}(R);
\draw[->] (L) to (R);
\draw[->] (L) to node[midway,below,sloped]{$ A\mapsto f(A^{\top}A)$}(D);
\draw[->] (R) to node[midway,below,sloped]{$b\colon A\mathcal{K}\mapsto f(A^{\top}A)$}(D);
\end{tikzpicture}
\caption{}
\label{fig1}
\end{figure}
\begin{proof}
We choose $b\colon A\mathcal{K}\mapsto f(A^{\top}A)$ (see Figure~\ref{fig1}), and we need to check that it is well-defined and bijective.

Suppose that $B_1,B_2$ are elements in any given coset $A\mathcal{K}$. 
Then, $B_1B_2^{-1}\in\mathcal{K}$, and hence $f((B_1B_2^{-1})^{\top}B_1B_2^{-1})=f(I)$, which by the affine invariance assumption is equivalent to $f(B_1^{\top}B_1)=f(B_2^{\top}B_2)$.
This means that the choice of representative element in $A\mathcal{K}$ is immaterial, and hence $b$ is well-defined.

Injectivity follows since for any $B_1,B_2\in\gl(n)$, $f(B_1^{\top}B_1)=f(B_2^{\top}B_2)$ implies that $B_1$ and $B_2$ are in the same $\mathcal{K}$-coset. 
Surjectivity follows since $A \mapsto A^{\top}A$ is onto $\pd(n)$.
\end{proof}

\begin{lemma}
The canonical map $\gl(n)\to \gl(n)/\mathcal{K}$ factors as $A \mapsto \det^2(A)$ followed by a homomorphism $H$ from $(0,\infty)$ (viewed as a multiplicative group) to some quotient group.
\end{lemma}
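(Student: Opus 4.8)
The plan is to establish the factorization by showing that the kernel of the homomorphism $\det^2\colon \gl(n)\to(0,\infty)$ is contained in $\mathcal{K}$, after which the homomorphism $H\colon(0,\infty)\to\gl(n)/\mathcal{K}$ is produced by a standard first-isomorphism-theorem argument. As preliminary observations, I would record two consequences of \eqref{e_orthogonal}: writing a singular value decomposition $A=P\Lambda Q$ with $P,Q$ orthogonal and $\Lambda=\diag(\sigma_1,\dots,\sigma_n)$, equation \eqref{e_orthogonal} gives $f(A^{\top}A)=f(\Lambda^2)$, so that $f(A^{\top}A)$ depends only on the singular values of $A$; and taking $Q$ to be a permutation matrix shows that $g(d_1,\dots,d_n):=f(\diag(d_1,\dots,d_n))$ is a symmetric function of its arguments.

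The key additional input is the pairing freedom implicit in the computation \eqref{e7}--\eqref{e8}. Since the value $f(A^{\top}B^{\top}BA)$ is intrinsic while a singular value decomposition may list the singular values in any order, applying that computation to the diagonal matrices $A=\diag(\sqrt{u_i})$ and $B=\diag(\sqrt{v_i})$ with two different orderings of the singular values of $B$ gives two expressions for the same quantity, hence
\begin{align}
g(u_1v_1,u_2v_2,d_3,\dots,d_n)=g(u_1v_2,u_2v_1,d_3,\dots,d_n)
\label{e_pair}
\end{align}
for all positive $u_1,u_2,v_1,v_2$ and all positive spectator entries $d_3,\dots,d_n$ (obtained by fixing the remaining singular values).

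Next I would use \eqref{e_pair} to prove that $g$ depends only on the product $\prod_i d_i$. Given entries $(p,q)$ and $(p',q')$ with $pq=p'q'$, the system $u_1v_1=p$, $u_2v_2=q$, $u_1v_2=p'$, $u_2v_1=q'$ admits a positive solution precisely because of the constraint $pq=p'q'$, so \eqref{e_pair} yields $g(p,q,\dots)=g(p',q',\dots)$. Applying this transposition of a unit of mass between two coordinates repeatedly, together with the symmetry of $g$, transports any tuple to $(\prod_i d_i,1,\dots,1)$; hence $g(d_1,\dots,d_n)$ depends only on $\prod_i d_i$. In particular, if $|\det A|=1$ then $\prod_i\sigma_i^2=1$, so $f(A^{\top}A)=f(\Lambda^2)=g(1,\dots,1)=f(I)$, i.e. $\ker(\det^2)=\{A\colon|\det A|=1\}\subseteq\mathcal{K}$.

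Finally, letting $\phi\colon\gl(n)\to\gl(n)/\mathcal{K}$ be the canonical map, I would define $H\colon(0,\infty)\to\gl(n)/\mathcal{K}$ by $H(t)=A\mathcal{K}$ for any $A$ with $\det^2(A)=t$. This is well defined because $\det^2 A=\det^2 B$ forces $\det^2(B^{-1}A)=1$, whence $B^{-1}A\in\ker(\det^2)\subseteq\mathcal{K}$ and $A\mathcal{K}=B\mathcal{K}$; it is a homomorphism since $H(st)=(AB)\mathcal{K}=(A\mathcal{K})(B\mathcal{K})=H(s)H(t)$ when $\det^2 A=s$ and $\det^2 B=t$; and by construction $\phi=H\circ\det^2$, which is the asserted factorization and simultaneously shows that $H$ is onto $\gl(n)/\mathcal{K}$. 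I expect the main obstacle to be the middle step, namely promoting the symmetry of $g$ and the single pairing identity \eqref{e_pair} to genuine dependence on the determinant alone; once $\ker(\det^2)\subseteq\mathcal{K}$ is established, the construction and verification of $H$ are routine.
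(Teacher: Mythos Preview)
Your argument is correct, and it reaches the same conclusion as the paper---namely that $\ker(\det^2)\subseteq\mathcal{K}$, after which the construction of $H$ via the first isomorphism theorem is identical---but the route is genuinely different. The paper observes from \eqref{e_com} that $\mathcal{K}$ contains every commutator, hence contains the derived subgroup of $\gl(n)$, and then invokes the classical (but cited, not proved) fact that over a field with more than two elements this derived subgroup is $\spl(n)$; a short extra remark handles $\det(A)=-1$. You instead stay entirely inside the analytic setup: from orthogonal invariance and the SVD you reduce to the symmetric function $g$ on diagonal entries, and then exploit the non-uniqueness of the SVD in the computation \eqref{e7}--\eqref{e8} to obtain the pairing identity \eqref{e_pair}, which you correctly promote to the statement that $g$ depends only on the product of its arguments. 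Your approach is more elementary and fully self-contained, avoiding the external group-theoretic input; the paper's approach is shorter once one is willing to quote the structure of the commutator subgroup, and it makes transparent why the result is really a statement about the abelianization of $\gl(n)$. Either way, the final factorization step you wrote is exactly the one the paper uses.
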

\begin{proof}
In \eqref{e_com} we have proved that $\mathcal{K}$ contains the subgroup generated by the commutators, called the \emph{derived subgroup}.
We now invoke the following known fact (see \cite{derived}):
\begin{quote}
\it
The derived subgroup of the general linear group (the group of invertible $n\times n$ matrices over a given field) is the special linear group (the group of $n\times n$ matrices of determinant $1$), under either of the follows conditions
\begin{itemize}
\item $n\ge 3$.
\item The field has at least three elements.
\end{itemize}
\end{quote}
Let us remark that this fact can be shown by checking that under either of the above conditions, the \emph{elementary matrices}\footnote{$A\in\gl(n)$ is said to be an elementary matrix if $A-I$ has at most one non-zero off-diagonal element. The set of elementary matrices generate the $\spl(n)$.} can be expressed as a commutator (see \cite{commutator}).
Now in our problem the underlying field $\mathbb{R}$ fulfills the second condition above, therefore the derived subgroup is $\spl(n)$ and hence $\mathcal{K}$ contains $\spl(n)$.
Further since $f(A^{\top}A)=f((A^{\top}A)^{1/2})=f(I)$ for any $A$ with $\det(A)= -1$, we see that $\mathcal{K}$ actually contains the larger subgroup $\{A\colon \det^2(A) = 1\}$.
Therefore by the \emph{fundamental theorem on group homomorphisms}, 
we see that the canonical map $\gl(n)\to \gl(n)/\mathcal{K}$ must factor as claimed in the lemma, and that $(0,\infty)/\ker(H)$ is isomorphic to $\gl(n)/\mathcal{K}$.
\end{proof}

\begin{proof}[Proof of the main theorem]
Let $M\in\pd(n)$ be arbitrary; we have (see Figure~\ref{fig1})
\begin{align}
f(M) &= f((M^{1/2})^{\top} M^{1/2})
\\
&=b (H ({\rm det}^2(M^{1/2})))
\\
&=b(H(\det (M))).
\end{align}
\end{proof}

\subsection*{Motivation from robust statistics} 
In the robust mean estimation problem, a statistician is given a set $\{X_1,\dots,X_k\}$ of data points in $\mathbb{R}^n$, 
some of which have been corrupted,
and the goal is to estimate the mean of the uncorrupted data points.
The classical \emph{minimum covariance determinant estimator} (see \cite[(3.5)]{rousseeuw1985multivariate})
returns the following estimate:
\begin{align}
T(X_1,\dots,X_k):=&\textrm{mean of the $h$ points of $\{X_1,\dots,X_k\}$ for which the determinant}
\nonumber\\
&\textrm{of the covariance matrix is minimal} 
\label{e_mcd}
\end{align}
where $h\le k$ is some integer,  usually chosen based on the proportion of the corrupted data points.

While other objective functions could be used in lieu of the determinant function in \eqref{e_mcd},
it has been well-noted that the determinant function enjoys the desirable equivariant property, that is, the estimator commutes with affine transformations
(see e.g.\ \cite{rousseeuw1985multivariate}
\cite{donoho1992breakdown}).
The equivariant property clearly follows from the affine invariant property of the determinant function.
In contrast, the main theorem in this note establishes the converse statement, that any affine invariant function must factor through the determinant function.

\section*{Acknowledgement}
The author started looking into the problem discussed in this note after a discussion with Prof.\ Jiantao Jiao on robust statistics.
The author gratefully acknowledges Prof.\ Jiantao Jiao for his comments and for encouraging the author to write up this note.

\bibliographystyle{plainurl}
\bibliography{ref_aff}

\end{document}